\documentclass[11pt]{article}
\usepackage[T1]{fontenc}
\usepackage{amsmath,amssymb,amsthm}
\usepackage[margin=1in]{geometry}
\usepackage[hidelinks]{hyperref}
\usepackage[expansion=false]{microtype}

\newtheorem{theorem}{Theorem}[section]
\newtheorem{proposition}[theorem]{Proposition}
\newtheorem{lemma}[theorem]{Lemma}
\newtheorem{corollary}[theorem]{Corollary}
\theoremstyle{remark}
\newtheorem{remark}[theorem]{Remark}
\newtheorem*{inputtheorem}{Smooth-domain input}
\newcommand{\D}{\mathbb D}
\newcommand{\R}{\mathbb R}
\newcommand{\Om}{\Omega}
\newcommand{\dist}{\operatorname{dist}}

\newcommand{\loc}{\mathrm{loc}}

\title{Removing Boundary Smoothness from the Hot-Spots Conclusion\\
in the Simple-Eigenvalue Case}
\author{Krzysztof Burdzy}
\date{October 7, 2026}

\begin{document}
\maketitle

\begin{abstract}
We give a reduction from a smooth-domain theorem excluding interior critical
points of first positive Neumann eigenfunctions to a non-strict hot-spots
conclusion on arbitrary bounded simply connected planar domains with compact
Neumann resolvent and simple first positive eigenvalue. The main analytic
step is spectral convergence under exhaustion by conformal images of
concentric disks. A reflection extension, conformal invariance of Dirichlet
energy, and compactness of the Sobolev embedding give a direct proof.
A truncation argument justifies square integrability of the transferred
extensions without any boundary regularity. The smooth-domain theorem is
an external input; the approximation argument is proved here in full.
The paper was generated by ChatGPT with prompts from the listed author.
\end{abstract}

\section{Human commentary}

(i) This paper was generated by ChatGPT in response to multiple prompts
from the human author. KB has not checked every detail, because the
theorem is, to a human, an obvious corollary of the results
in \cite{OpenAI} and can certainly be proved easily in several ways.

(ii) Independently and at the same time, Rodrigo Ba\~nuelos used ChatGPT
to generate an almost identical theorem and proof. Neither KB nor Rodrigo Ba\~nuelos intend
to submit their papers to a journal. If the results or arguments
are cited, equal credit should be given to KB, Rodrigo Ba\~nuelos and to
any other mathematicians who obtained the result at the same time.

(iii) The OpenAI model that generated \cite{OpenAI} missed the theorem
presented in this note. Several mathematicians studying the hot spots
conjecture for simply connected domains have considered nonsmooth
domains, including domains with internal cusps.

(iv) It is highly surprising that \cite{OpenAI} did not cite the
spectacular paper of de Dios Pont \cite{deDiosPont}, which shows that
the hot spots conjecture fails for convex domains in high dimensions.
Even if there were a reason for the omission, such as a gap in the
proof, the paper should nevertheless have been cited, with an
appropriate caveat.

\section{Statement and smooth-domain input}

Throughout, a domain is a nonempty connected open set. Let
$\Om\subset\R^2$ be bounded and simply connected. The Neumann Laplacian
$A_{\Om}$ is the nonnegative self-adjoint operator associated with the form
\[
 \mathcal E_{\Om}(f,g)=\int_{\Om}\nabla f\cdot\nabla g\,dx,
 \qquad \mathcal D(\mathcal E_{\Om})=H^1(\Om).
\]
No pointwise interpretation of a normal derivative on $\partial\Om$ is
required. We assume that
\begin{equation}\label{eq:compact}
 H^1(\Om)\hookrightarrow L^2(\Om)\quad\text{is compact}.
\end{equation}
Equivalently, $A_{\Om}$ has compact resolvent. Its eigenvalues, counted with
multiplicity, can then be written as
\[
 0=\lambda_0(\Om)<\lambda_1(\Om)\leq\lambda_2(\Om)\leq\cdots,
 \qquad \lambda_j(\Om)\longrightarrow\infty.
\]
Connectedness makes the zero eigenvalue simple. In particular,
\begin{equation}\label{eq:rayleigh}
 \lambda_1(\Om)=
 \min\left\{\int_{\Om}|\nabla f|^2\,dx:
 f\in H^1(\Om),\ \int_{\Om}f\,dx=0,\ \int_{\Om}f^2\,dx=1\right\}.
\end{equation}
All eigenfunctions below are real-valued and represented by their smooth
interior representatives.

The following is the external result used in the reduction. It is the
smooth-domain assertion stated in the preprint listed in the OpenAI
mathematical collection \cite{OpenAI}.

\begin{inputtheorem}\label{input:smooth}
If $G\subset\R^2$ is a bounded simply connected domain with smooth boundary,
then every nonzero eigenfunction $v$ for $\lambda_1(G)$ satisfies
\[
 \nabla v(x)\ne0\qquad (x\in G).
\]
\end{inputtheorem}

We take this assertion as an input and do not assess or reproduce its proof.
Only its consequence that $v$ has no interior local maximum or minimum is
needed. Thus every hot-spots conclusion below is an implication from this
explicit smooth-domain input. The spectral-convergence results do not
depend on it.

\begin{theorem}\label{thm:main}
Assume the smooth-domain input. Let $\Om\subset\R^2$ be bounded and simply
connected, suppose \eqref{eq:compact} holds, and suppose $\lambda_1(\Om)$
is simple. For every nonzero eigenfunction $u$ for $\lambda_1(\Om)$,
\begin{align}
 \sup_{x\in\Om}u(x)
 &=\lim_{\varepsilon\downarrow0}
   \sup_{\substack{x\in\Om\\\dist(x,\partial\Om)<\varepsilon}}u(x),
   \label{eq:hot}\\
 \inf_{x\in\Om}u(x)
 &=\lim_{\varepsilon\downarrow0}
   \inf_{\substack{x\in\Om\\\dist(x,\partial\Om)<\varepsilon}}u(x).
   \label{eq:cold}
\end{align}
The equalities are understood in the extended real numbers if $u$ is
unbounded. No regularity assumption on $\partial\Om$ is imposed.
\end{theorem}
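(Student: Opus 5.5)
Let $\varphi:\D\to\Om$ be a Riemann map, and for $r\uparrow1$ set $\Om_r=\varphi(r\D)$. Each $\Om_r$ is a bounded simply connected domain with real-analytic boundary, so the smooth-domain input applies to it. The plan has two stages.

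The first stage is to prove spectral convergence: $\lambda_1(\Om_r)\to\lambda_1(\Om)$, and normalized eigenfunctions $u_r$ on $\Om_r$ converge to $u$ locally uniformly on $\Om$, after a choice of sign. Simplicity of $\lambda_1(\Om)$ is what makes this limit well defined.

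The second stage is to argue by contradiction. Suppose \eqref{eq:hot} fails. Then $\sup_\Om u$ strictly exceeds $\lim_{\varepsilon\downarrow0}\sup_{\dist(x,\partial\Om)<\varepsilon}u$. This forces $\sup_\Om u$ to be finite and attained at an interior point $x_0$. Moreover, there is a compact $K\subset\Om$ containing $x_0$ and some $\delta>0$ with $u\le u(x_0)-\delta$ on $\Om\setminus K$. I would pick a smooth compact neighbourhood $K'\supset K$ with $K'\subset\Om_r$ for $r$ near $1$, and with $u<u(x_0)-\delta/2$ on $\partial K'$. Local uniform convergence $u_r\to u$ on $K'$ then gives $\max_{\partial K'}u_r<u_r(x_0)$ for $r$ close to $1$. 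Hence $u_r$ has an interior local maximum in $K'$, so $\nabla u_r=0$ there, contradicting the input. The case \eqref{eq:cold} follows by replacing $u$ with $-u$.

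For the spectral convergence I would proceed in four steps.

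\emph{Upper bound.} Given a test function $f\in H^1(\Om)$, its restriction to $\Om_r$, recentred to mean zero, gives $\limsup_{r\to1}\lambda_1(\Om_r)\le\lambda_1(\Om)$ via \eqref{eq:rayleigh}. This uses only dominated convergence of $\int_{\Om_r}$ to $\int_\Om$.

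\emph{Extension.} For the lower bound, transfer $u_r$ to $r\D$ by $w_r=u_r\circ\varphi$; Dirichlet energy is conformally invariant. Extend $w_r$ to $\D$ by inversion in the circle $|z|=r$. The extended function has energy at most $C\int_{r\D}|\nabla w_r|^2$, uniformly for $r\ge1/2$. Push it back by $\varphi$ to get $\tilde u_r$ on $\Om$, with $\int_\Om|\nabla\tilde u_r|^2\le C\lambda_1(\Om_r)$.

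\emph{$L^2$ control.} Square-integrability of $\tilde u_r$ on $\Om\setminus\Om_r$ is not automatic, since the weight $|\varphi'|^2$ can be wild. I would truncate: $\tilde u_r^M=\max(-M,\min(M,\tilde u_r))$ lies in $H^1(\Om)$ with controlled gradient. Then a Poincar\'e-type inequality on $\Om$, which follows from \eqref{eq:compact}, applied to $\tilde u_r^M$ minus a constant, gives a bound independent of $M$. Letting $M\to\infty$ shows that $\tilde u_r$ lies in $H^1(\Om)$ with uniform norm bound. One also needs that the $L^2$ mass on $\Om\setminus\Om_r$ tends to $0$. I would get this from the same uniform $H^1$ bound, compactness, and the fact that $|\Om\setminus\Om_r|\to0$, since strong $L^2$ convergence gives uniform integrability.

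\emph{Conclusion.} By compactness \eqref{eq:compact}, a subsequence of $\tilde u_r$ converges strongly in $L^2$ and weakly in $H^1$ to some $v$ with mean zero and norm $1$. Lower semicontinuity gives $\int_\Om|\nabla v|^2\le\liminf\lambda_1(\Om_r)$, using that the energy on $\Om\setminus\Om_r$ vanishes in the limit. For that I would use $\int_{\D\setminus r\D}$ of the reflected gradient, controlled by the energy in an annulus $r^2<|z|<r$. This is not obviously small, so I would instead test the weak eigen-equation against fixed $\psi\in H^1(\Om)$ to show that $v$ is an eigenfunction with eigenvalue $\lim\lambda_1(\Om_r)\le\lambda_1(\Om)$. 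Since $v\perp1$, that eigenvalue must equal $\lambda_1(\Om)$. Simplicity gives $v=\pm u$, and the sign can be fixed along the whole family. Interior elliptic estimates then upgrade $L^2$ convergence to local uniform convergence.

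The main obstacle I expect is the $L^2$ control on the collar $\Om\setminus\Om_r$ without boundary regularity, namely the truncation and Poincar\'e step. Passing to the limit in the weak equation near the moving boundary is a close second.
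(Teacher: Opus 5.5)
Your proposal is correct. It has the same overall architecture as the paper: exhaustion by $\varphi(r\D)$, reflection across $|z|=r$ pulled back to $\Om$, truncation for square integrability, compactness, interior elliptic upgrade, and a contradiction from an interior maximum of $u_r$ on a compact set. Three analytic steps are done differently.

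\emph{Extension.} Your inversion $z\mapsto r^2/\bar z$ is anticonformal. The collar energy is therefore exactly the energy of $w_r$ on $r^2<|z|<r$, giving constant $2$ for every $r$. The paper's radial reflection $\rho\mapsto2r-\rho$ needs a polar-coordinate estimate and gives $3$ for $r\ge3/4$.

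\emph{Truncation.} You close the truncation with the Poincar\'e inequality, i.e.\ \eqref{eq:rayleigh}, instead of the tightness modulus $\kappa(r)$ of Lemma~\ref{lem:tightness}. This works for every $r$ and needs only the spectral gap. You should make explicit how the mean $c_M$ of $\tilde u_r^M$ is controlled: from $\|\tilde u_r^M\|_{L^2(\Om_r)}\le1$ one gets $|c_M|\,|\Om_r|^{1/2}\le1+\lambda_1(\Om)^{-1/2}\|\nabla\tilde u_r\|_{L^2(\Om)}$. The price is that the vanishing exterior mass must then be proved separately by your compactness/uniform-integrability argument, which is Lemma~\ref{lem:tightness} in sequential form. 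The paper's $\kappa(r)$ gives both $L^2$ membership and the quantitative smallness \eqref{eq:mass-extension} at once.

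\emph{Identifying the limit.} The moving-boundary issue you flag in the weak equation is harmless. For fixed $\psi\in H^1(\Om)$, $\bigl|\int_{\Om\setminus\Om_{r_n}}\nabla\tilde u_{r_n}\cdot\nabla\psi\bigr|\le\sup_n\|\nabla\tilde u_{r_n}\|_{L^2(\Om)}\,\|\nabla\psi\|_{L^2(\Om\setminus\Om_{r_n})}\to0$. Mean zero and unit norm then force the limiting eigenvalue to be positive, hence equal to $\lambda_1(\Om)$. The paper sidesteps the collar energy differently: it applies weak lower semicontinuity on a fixed $\Om_s\subset\Om_{r_n}$, where the extension equals $u_{r_n}$, lets $s\uparrow1$, and uses minimality in \eqref{eq:rayleigh}. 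Your route yields the eigen-equation for $v$ directly. The paper's needs only the energy bound plus the standard fact that Rayleigh minimizers are eigenfunctions.
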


\begin{corollary}\label{cor:continuous}
Under the assumptions of Theorem~\ref{thm:main}, if $u$ extends continuously
to $\overline\Om$, then
\[
 \max_{\overline\Om}u=\max_{\partial\Om}u,
 \qquad
 \min_{\overline\Om}u=\min_{\partial\Om}u.
\]
\end{corollary}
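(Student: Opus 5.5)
The statement to prove is Corollary~\ref{cor:continuous}, and it follows from Theorem~\ref{thm:main} by a compactness argument. Assume $u$ extends continuously to $\overline\Om$ and keep the name $u$ for the extension. Since $\Om$ is bounded, $\overline\Om$ is compact, so $u$ is bounded there and attains its maximum. Also $\partial\Om$ is compact and nonempty, because $\Om$ is a bounded nonempty open proper subset of $\R^2$. Hence $\max_{\partial\Om}u$ exists, and trivially $\max_{\partial\Om}u\le\max_{\overline\Om}u$. By continuity and density of $\Om$ in $\overline\Om$, we also have $\max_{\overline\Om}u=\sup_{\Om}u$. It therefore remains to show that $\sup_\Om u\le\max_{\partial\Om}u$.

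For this we use \eqref{eq:hot}. All quantities are finite since $u$ is bounded. Write
\[
S_\varepsilon=\sup\{u(x):x\in\Om,\ \dist(x,\partial\Om)<\varepsilon\}.
\]
Note that $S_\varepsilon$ is nondecreasing in $\varepsilon$ and is over a nonempty set for every $\varepsilon>0$, since points of $\Om$ approach $\partial\Om$. For each $n$ pick $x_n\in\Om$ with $\dist(x_n,\partial\Om)<1/n$ and $u(x_n)>S_{1/n}-1/n$. By compactness of $\overline\Om$ pass to a subsequence $x_n\to x^*\in\overline\Om$. Since $\dist(\cdot,\partial\Om)$ is continuous, $\dist(x^*,\partial\Om)=0$, and $\partial\Om$ is closed, so $x^*\in\partial\Om$. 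Continuity of $u$ on $\overline\Om$ then gives
\[
u(x^*)=\lim u(x_n)\ge\lim_{n} S_{1/n}=\sup_\Om u,
\]
where the last equality is \eqref{eq:hot}. Hence $\max_{\partial\Om}u\ge\sup_\Om u=\max_{\overline\Om}u$.

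The minimum statement follows either from the same argument with \eqref{eq:cold}, or by applying the maximum statement to $-u$, which is again a nonzero eigenfunction for $\lambda_1(\Om)$ with a continuous extension. There is no real obstacle here. The only points needing care are identifying $\max_{\overline\Om}u$ with $\sup_\Om u$ through the density of $\Om$ in $\overline\Om$, and checking that the limit point $x^*$ actually lies in $\partial\Om$. All of the analytic content is contained in Theorem~\ref{thm:main}.
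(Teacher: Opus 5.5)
Your proof is correct and follows essentially the paper's argument: near-maximizing points in shrinking boundary layers have subsequential limits on $\partial\Om$, and continuity together with \eqref{eq:hot} gives $\sup_\Om u\le\max_{\partial\Om}u$. The reverse inequality is the trivial one (the paper phrases it as approaching a boundary maximizer from within $\Om$, you phrase it as $\max_{\overline\Om}u=\sup_\Om u$), and the minimum case via \eqref{eq:cold} or $-u$ matches the paper.
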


The theorem concerns extremal values approached near the boundary. It does
not assert that an interior point cannot share an extremal value with the
boundary, or that $\nabla u$ is nonzero throughout $\Om$.

\section{Conformal exhaustion and uniform smallness of exterior mass}

Fix a conformal bijection $\phi:\D\to\Om$, where
$\D=\{z\in\mathbb C:|z|<1\}$, and set
\[
 \D_r=\{z:|z|<r\},\qquad \Om_r=\phi(\D_r),\qquad 0<r<1.
\]
The map $\phi$ is holomorphic and injective on a neighborhood of
$\overline{\D_r}$, with nonvanishing derivative there. Consequently
$\Om_r$ has a real-analytic Jordan boundary. Furthermore,
\begin{equation}\label{eq:exhaust}
 \Om_r\Subset\Om,\qquad \Om_r\uparrow\Om\quad(r\uparrow1),
\end{equation}
and every compact subset of $\Om$ lies in $\Om_r$ for all sufficiently
large $r$. No extension of $\phi$ to $\partial\D$ is used.

\begin{lemma}\label{lem:tightness}
Under \eqref{eq:compact}, define
\[
 \kappa(r)=\sup_{0\ne f\in H^1(\Om)}
 \frac{\displaystyle\int_{\Om\setminus\Om_r}|f|^2\,dx}
 {\displaystyle\int_{\Om}(|f|^2+|\nabla f|^2)\,dx}.
\]
Then $\kappa(r)\to0$ as $r\uparrow1$.
\end{lemma}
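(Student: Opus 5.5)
Argue by contradiction and reduce the claim to the compactness assumption \eqref{eq:compact}. First note that $\kappa(r)$ is nonincreasing in $r$, since $\Om\setminus\Om_r$ shrinks as $r$ increases by \eqref{eq:exhaust}. It is also finite, indeed $\kappa(r)\le1$. So it suffices to rule out $\kappa(r)\ge 2\delta>0$ for all $r<1$. Suppose this holds. Pick $r_n\uparrow1$ and $f_n\in H^1(\Om)$ normalized so that $\|f_n\|_{H^1(\Om)}=1$ and
\[
 \int_{\Om\setminus\Om_{r_n}}|f_n|^2\,dx\ge\delta .
\]

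Since $(f_n)$ is bounded in $H^1(\Om)$, the compact embedding \eqref{eq:compact} lets us pass to a subsequence converging strongly in $L^2(\Om)$ to some $f\in L^2(\Om)$. (Weak convergence in $H^1$ also holds, but it is not needed.) By the triangle inequality in $L^2(\Om\setminus\Om_{r_n})$,
\[
 \delta^{1/2}\le\|f_n\|_{L^2(\Om\setminus\Om_{r_n})}
 \le\|f_n-f\|_{L^2(\Om)}+\|f\|_{L^2(\Om\setminus\Om_{r_n})}.
\]
The first term tends to $0$ by strong convergence. For the second term, the sets $\Om\setminus\Om_{r_n}$ decrease to the empty set, because every point of $\Om$ lies in some $\Om_r$ by \eqref{eq:exhaust}. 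Dominated convergence applied to $|f|^2\mathbf 1_{\Om\setminus\Om_{r_n}}$ then shows that the second term tends to $0$. This contradicts $\delta>0$, and hence $\kappa(r)\to0$.

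The only point needing care is that the $L^2$ limit of a bounded $H^1$ sequence is one fixed function. Its tail mass vanishes by absolute continuity of the integral, so no uniformity over $f$ has to be proved directly; compactness converts the required uniform statement into a statement about a single function. No boundary regularity is used beyond \eqref{eq:compact}, and this is consistent with the lemma's hypotheses. I do not expect any step to be a serious obstacle. The main thing to check is that the exhaustion property $\bigcup_r\Om_r=\Om$ holds pointwise, and it does because $\phi$ is a bijection onto $\Om$.
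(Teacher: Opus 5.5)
Your proposal is correct and follows essentially the same route as the paper: argue by contradiction, use the compact embedding \eqref{eq:compact} to extract an $L^2$-convergent subsequence, and kill the exterior mass by splitting off $\|f_n-f\|_{L^2}$ and applying dominated convergence to the single limit $f$. The only cosmetic differences are that you use the triangle inequality in place of the paper's $2a^2+2b^2$ bound, and that you make explicit the monotonicity of $\kappa$ behind the choice of the sequence $r_n$.
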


\begin{proof}
Otherwise there are $r_n\uparrow1$, $c>0$, and $f_n\in H^1(\Om)$ such that
\[
 \|f_n\|_{H^1(\Om)}=1,
 \qquad \int_{\Om\setminus\Om_{r_n}}|f_n|^2\,dx\geq c.
\]
By compactness, after passing to a subsequence, $f_n\to f$ strongly in
$L^2(\Om)$. Hence
\[
 \int_{\Om\setminus\Om_{r_n}}|f_n|^2\,dx
 \leq 2\|f_n-f\|_{L^2(\Om)}^2
      +2\int_{\Om\setminus\Om_{r_n}}|f|^2\,dx
 \longrightarrow0.
\]
The last integral tends to zero by dominated convergence and
\eqref{eq:exhaust}, giving a contradiction.
\end{proof}

\section{An extension lemma, including square integrability}

\begin{lemma}\label{lem:extension}
For every $r\geq3/4$ sufficiently close to $1$ and every $f\in H^1(\Om_r)$,
there is $F\in H^1(\Om)$ equal to $f$ on $\Om_r$ such that
\begin{align}
 \int_{\Om}|\nabla F|^2\,dx
 &\leq3\int_{\Om_r}|\nabla f|^2\,dx,\label{eq:energy-extension}\\
 \int_{\Om\setminus\Om_r}|F|^2\,dx
 &\leq\frac{\kappa(r)}{1-\kappa(r)}
 \left(\int_{\Om_r}|f|^2\,dx
       +3\int_{\Om_r}|\nabla f|^2\,dx\right).
 \label{eq:mass-extension}
\end{align}
In particular, extensions of a family bounded in $H^1(\Om_r)$ are bounded
in $H^1(\Om)$ and have exterior $L^2$ mass tending to zero as $r\uparrow1$.
\end{lemma}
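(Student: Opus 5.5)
Pull back to the disk through $\phi$ and reflect across the circle $|z|=r$. Set $g=f\circ\phi\in H^1(\D_r)$; by conformal invariance of the Dirichlet integral,
\[
\int_{\D_r}|\nabla g|^2=\int_{\Om_r}|\nabla f|^2 .
\]
Let $\sigma_r(z)=r^2/\bar z$ be the inversion in $|z|=r$. It is anticonformal and maps the annulus $\{r<|z|<1\}$ into $\{r^2<|z|<r\}$. Define $G=g$ on $\D_r$ and $G=g\circ\sigma_r$ on $\{r\le|z|<1\}$. The two pieces have matching traces on $|z|=r$, so $G\in H^1_{\loc}(\D)$. Anticonformality of $\sigma_r$ gives
\[
\int_{\D\setminus\D_r}|\nabla G|^2=\int_{\sigma_r(\D\setminus\D_r)}|\nabla g|^2\le\int_{\D_r}|\nabla g|^2 .
\]
Hence $\int_\D|\nabla G|^2\le 2\int_{\D_r}|\nabla g|^2$. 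Put $F=G\circ\phi^{-1}$. Conformal invariance again gives $\int_\Om|\nabla F|^2\le 2\int_{\Om_r}|\nabla f|^2$, which is already within the allowed factor $3$ in \eqref{eq:energy-extension}.

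The main obstacle is showing that $F\in L^2(\Om)$. The reflection only controls the Dirichlet energy. $L^2$ norms are not conformally invariant, and $|\phi'|^2$ on the outer annulus need not be comparable to $|\phi'|^2$ at the reflected points, since no boundary regularity is available. Truncation plus Lemma~\ref{lem:tightness} handles this. For $M>0$ let $T_M(s)=\max(-M,\min(M,s))$ and $F_M=T_M\circ F$. Then $F_M$ is bounded on the bounded set $\Om$, so $F_M\in L^2(\Om)$, and $|\nabla F_M|\le|\nabla F|$. Thus $F_M\in H^1(\Om)$, and Lemma~\ref{lem:tightness} applies to it:
\[
\int_{\Om\setminus\Om_r}|F_M|^2\le\kappa(r)\Big(\int_{\Om_r}|F_M|^2+\int_{\Om\setminus\Om_r}|F_M|^2+\int_\Om|\nabla F_M|^2\Big).
\]
Fix $r$ close enough to $1$ that $\kappa(r)<1$. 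The left side is finite, so we may absorb it:
\[
\int_{\Om\setminus\Om_r}|F_M|^2\le\frac{\kappa(r)}{1-\kappa(r)}\Big(\int_{\Om_r}|f|^2+2\int_{\Om_r}|\nabla f|^2\Big),
\]
using $|F_M|\le|f|$ on $\Om_r$ and the energy bound above. Letting $M\to\infty$, monotone convergence shows that $F\in L^2(\Om)$ and that \eqref{eq:mass-extension} holds (the constant $2$ is at most $3$). With $F\in L^2$ and $\nabla F\in L^2$ we get $F\in H^1(\Om)$.

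One technical point is that $G\in H^1_{\loc}(\D)$ across the circle. Pulled back through $\phi$, the circle $|z|=r$ becomes a compact analytic curve inside $\Om$, and $F$ is $f$ glued to a composition with a bi-Lipschitz reflection in a neighbourhood of $\partial\Om_r$. So the standard fact applies: an $H^1$ function glued along a Lipschitz interface with equal traces is in $H^1$. The restriction $r\ge3/4$ and "sufficiently close to $1$" serve only to ensure $\kappa(r)<1$, say $\kappa(r)\le1/2$. The final sentence of the lemma then follows from $\kappa(r)\to0$ in Lemma~\ref{lem:tightness}.
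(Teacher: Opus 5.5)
Your proof is correct and follows the paper's argument: pull back by $\phi$, reflect across $|z|=r$, push forward using conformal invariance of the Dirichlet integral, then truncate, apply Lemma~\ref{lem:tightness}, absorb, and let $M\to\infty$. The only difference is that you reflect with the anticonformal inversion $z\mapsto r^2/\bar z$ instead of the paper's radial reflection $\rho\mapsto 2r-\rho$, so the annular energy is controlled exactly rather than up to a distortion factor; this gives the constant $2$ instead of $3$ and makes the restriction $r\geq3/4$ unnecessary.
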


\begin{proof}
Put $q=f\circ\phi$ on $\D_r$. For each fixed $r<1$, the map $\phi$ and its
inverse have bounded derivatives on the relevant compact sets, so
$q\in H^1(\D_r)$. Extend $q$ to $\D$ by radial reflection:
\[
 Q(\rho e^{i\theta})=
 \begin{cases}
 q(\rho e^{i\theta}),&0\leq\rho<r,\\
 q((2r-\rho)e^{i\theta}),&r<\rho<1.
 \end{cases}
\]
The traces match at $\rho=r$, so $Q\in H^1(\D)$. This construction and
the calculation below can first be performed for smooth $q$ and then
extended by density.

In the reflected annulus put $s=2r-\rho$. Since $r\geq3/4$,
\[
 2r-1<s<r,\qquad \frac{2r-s}{s}\leq2,
 \qquad \frac{s}{2r-s}\leq1.
\]
The polar-coordinate energy formula therefore gives
\begin{align*}
 \int_{\D\setminus\D_r}|\nabla Q|^2
 &=\int_0^{2\pi}\int_{2r-1}^{r}
 \left((2r-s)|q_s|^2+\frac{|q_\theta|^2}{2r-s}\right)\,ds\,d\theta\\
 &\leq2\int_0^{2\pi}\int_{2r-1}^{r}
 \left(s|q_s|^2+\frac{|q_\theta|^2}{s}\right)\,ds\,d\theta
 \leq2\int_{\D_r}|\nabla q|^2.
\end{align*}
Thus
\begin{equation}\label{eq:disk-extension}
 \int_{\D}|\nabla Q|^2\leq3\int_{\D_r}|\nabla q|^2.
\end{equation}

Define $F=Q\circ\phi^{-1}$ on $\Om$. At this stage we know only that
$F\in H^1_{\loc}(\Om)$, that $F=f$ on $\Om_r$, and, by conformal
invariance of the two-dimensional Dirichlet integral, that
\[
 \int_{\Om}|\nabla F|^2
 =\int_{\D}|\nabla Q|^2
 \leq3\int_{\Om_r}|\nabla f|^2.
\]
Square integrability on $\Om$ does not follow merely from
$Q\in H^1(\D)$, because the change of variables introduces the possibly
unbounded weight $|\phi'|^2$. We establish it next.

For $M>0$ let $T_M(t)=\max(-M,\min(t,M))$ and set $F_M=T_M(F)$.
The function $F_M$ is bounded on the finite-measure set $\Om$, belongs to
$H^1_{\loc}(\Om)$, and has square-integrable weak gradient. Hence
$F_M\in H^1(\Om)$. Write
\[
 A=\int_{\Om_r}|f|^2,\qquad
 B=\int_{\Om_r}|\nabla f|^2,\qquad
 a_M=\int_{\Om\setminus\Om_r}|F_M|^2.
\]
Using Lemma~\ref{lem:tightness} and the Sobolev chain rule,
\[
 a_M\leq\kappa(r)\int_{\Om}(|F_M|^2+|\nabla F_M|^2)
 \leq\kappa(r)(A+a_M+3B).
\]
For $r$ sufficiently close to $1$, $\kappa(r)<1$. Rearranging yields
\[
 a_M\leq\frac{\kappa(r)}{1-\kappa(r)}(A+3B).
\]
Letting $M\to\infty$ and using monotone convergence proves
\eqref{eq:mass-extension}. In particular, $F\in L^2(\Om)$ and hence
$F\in H^1(\Om)$. The energy estimate was already established.
\end{proof}

\section{Convergence of the first positive eigenvalue and eigenfunctions}

\begin{proposition}\label{prop:spectral}
Let $\Om$ be bounded and simply connected and satisfy \eqref{eq:compact}.
Then
\begin{equation}\label{eq:eigenvalue-convergence}
 \lambda_1(\Om_r)\longrightarrow\lambda_1(\Om)
 \qquad(r\uparrow1).
\end{equation}
For any $r_n\uparrow1$ and any first eigenfunctions $u_n$ on $\Om_{r_n}$
normalized by
\begin{equation}\label{eq:normalization}
 \int_{\Om_{r_n}}u_n\,dx=0,
 \qquad \int_{\Om_{r_n}}u_n^2\,dx=1,
\end{equation}
there are a subsequence and a normalized first eigenfunction $v$ on $\Om$
such that
\begin{equation}\label{eq:local-convergence}
 u_n\longrightarrow v\quad\text{in }C^\infty_{\loc}(\Om).
\end{equation}
If $\lambda_1(\Om)$ is simple and $u$ is a prescribed normalized first
eigenfunction on $\Om$, the subsequence can be signed so that $v=u$.
\end{proposition}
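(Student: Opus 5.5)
The plan is to prove the two inequalities $\limsup_{r\uparrow1}\lambda_1(\Om_r)\le\lambda_1(\Om)$ and $\liminf\ge$ separately, and to extract the eigenfunction limit from the second argument.

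\textbf{Upper bound.} Let $w$ be a normalized first eigenfunction on $\Om$, so $\int_\Om w=0$, $\int_\Om w^2=1$, $\int_\Om|\nabla w|^2=\lambda_1(\Om)$. Use $w_r=w|_{\Om_r}-m_r$ as a test function in \eqref{eq:rayleigh} for $\Om_r$, where $m_r=|\Om_r|^{-1}\int_{\Om_r}w$. Since $|\Om\setminus\Om_r|\to0$ and $w\in L^2(\Om)$, dominated convergence gives $m_r\to0$ and $\int_{\Om_r}w_r^2\to1$. Also $\int_{\Om_r}|\nabla w_r|^2\le\lambda_1(\Om)$. Hence $\limsup\lambda_1(\Om_r)\le\lambda_1(\Om)$. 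In particular $\lambda_1(\Om_{r_n})$ is bounded, and the $u_n$ are bounded in $H^1(\Om_{r_n})$.

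\textbf{Lower bound and compactness.} Take $r_n\uparrow1$ and normalized $u_n$ as in \eqref{eq:normalization}. Apply Lemma~\ref{lem:extension} to obtain $U_n\in H^1(\Om)$ with $U_n=u_n$ on $\Om_{r_n}$ and
\[
\int_\Om|\nabla U_n|^2\le3\lambda_1(\Om_{r_n}).
\]
By \eqref{eq:mass-extension} and Lemma~\ref{lem:tightness}, the exterior mass satisfies $\int_{\Om\setminus\Om_{r_n}}U_n^2\to0$. So $(U_n)$ is bounded in $H^1(\Om)$, and by \eqref{eq:compact} a subsequence converges strongly in $L^2(\Om)$ and weakly in $H^1(\Om)$ to some $v$.

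From the exterior-mass bound we get $\int_\Om v^2=\lim\int_{\Om_{r_n}}u_n^2=1$. Similarly $\int_\Om v=0$: write $\int_\Om U_n=\int_{\Om\setminus\Om_{r_n}}U_n$ and bound it by Cauchy--Schwarz. Lower semicontinuity on each fixed $\Om_s$, followed by letting $s\uparrow1$, gives
\[
\int_\Om|\nabla v|^2\le\liminf\int_{\Om_{r_n}}|\nabla u_n|^2=\liminf\lambda_1(\Om_{r_n}).
\]
Note that we must use this localized version, not the full-domain energy of $U_n$, which carries the factor $3$. Therefore $\lambda_1(\Om)\le\liminf\lambda_1(\Om_{r_n})$. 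Since the sequence was arbitrary, this proves \eqref{eq:eigenvalue-convergence}. Equality in \eqref{eq:rayleigh} then forces $v$ to be a normalized first eigenfunction.

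\textbf{Local smooth convergence.} Fix a compact $K\Subset\Om$ and choose $K\subset K'\Subset\Om$. For large $n$, $K'\subset\Om_{r_n}$ and $-\Delta u_n=\lambda_1(\Om_{r_n})u_n$ there. Interior elliptic estimates for $\Delta+\lambda$, with $\lambda$ bounded, bound every $C^k(K)$ norm of $u_n-v$ by $C\|u_n-v\|_{L^2(K')}$ plus $C|\lambda_1(\Om_{r_n})-\lambda_1(\Om)|$. The first term tends to $0$ by the $L^2$ convergence, since $U_n=u_n$ on $K'$. This gives \eqref{eq:local-convergence}.

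\textbf{Sign choice.} If $\lambda_1(\Om)$ is simple, the normalized eigenfunctions are exactly $\pm u$, so $v=\pm u$. Replacing $u_n$ by $-u_n$ if necessary gives $v=u$.

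\textbf{Main obstacle.} The delicate step is the lower bound: we have to control the mean, the mass and the energy of the limit even though the extension inflates the energy by the factor $3$. Lemma~\ref{lem:extension} is used only for compactness and the mass control. The energy inequality must come from the localized lower semicontinuity described above.
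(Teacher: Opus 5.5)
Your proposal is correct and follows essentially the same route as the paper: the upper bound by testing with a recentred eigenfunction, the reflection extension used only for compactness and exterior-mass control, the energy bound via lower semicontinuity on fixed $\Om_s$ rather than the factor-$3$ extension energy, then interior elliptic estimates and the sign fix. The one cosmetic difference is that you apply the interior estimates directly to $u_n-v$, with an error term proportional to $|\lambda_1(\Om_{r_n})-\lambda_1(\Om)|$, whereas the paper uses uniform bounds and a diagonal subsequence.
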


\begin{proof}
Choose a normalized first eigenfunction $u$ on $\Om$. Define
\[
 c_r=\frac1{|\Om_r|}\int_{\Om_r}u\,dx.
\]
Then $c_r\to0$ and
\[
 \int_{\Om_r}|u-c_r|^2\,dx\longrightarrow1,
 \qquad
 \int_{\Om_r}|\nabla u|^2\,dx\longrightarrow\lambda_1(\Om).
\]
Testing the Rayleigh quotient with $u-c_r$ gives
\begin{equation}\label{eq:limsup}
 \limsup_{r\uparrow1}\lambda_1(\Om_r)\leq\lambda_1(\Om).
\end{equation}

For the opposite inequality, choose a sequence $r_n\uparrow1$ along which
$\lambda_1(\Om_{r_n})$ tends to its lower limit, and choose $u_n$ as in
\eqref{eq:normalization}. The eigenvalue equation gives
\[
 \int_{\Om_{r_n}}|\nabla u_n|^2\,dx=\lambda_1(\Om_{r_n}),
\]
which is bounded by \eqref{eq:limsup}. Lemma~\ref{lem:extension} provides
extensions $U_n\in H^1(\Om)$ for which
\begin{equation}\label{eq:extension-bounds}
 \sup_n\|U_n\|_{H^1(\Om)}<\infty,
 \qquad \int_{\Om\setminus\Om_{r_n}}|U_n|^2\,dx\longrightarrow0.
\end{equation}
After passage to a subsequence, compactness yields
\[
 U_n\rightharpoonup v\text{ in }H^1(\Om),
 \qquad U_n\longrightarrow v\text{ in }L^2(\Om).
\]
Since $U_n=u_n$ on $\Om_{r_n}$, \eqref{eq:normalization} and
\eqref{eq:extension-bounds} imply
\begin{equation}\label{eq:limit-normalization}
 \int_{\Om}v\,dx=0,\qquad\int_{\Om}v^2\,dx=1.
\end{equation}
For the mean, for example, Cauchy--Schwarz gives
\[
 \left|\int_{\Om}U_n\,dx\right|
 =\left|\int_{\Om\setminus\Om_{r_n}}U_n\,dx\right|
 \leq|\Om\setminus\Om_{r_n}|^{1/2}
      \|U_n\|_{L^2(\Om\setminus\Om_{r_n})}\longrightarrow0.
\]

Fix $s<1$. For all sufficiently large $n$, $\Om_s\subset\Om_{r_n}$.
Weak lower semicontinuity gives
\[
 \int_{\Om_s}|\nabla v|^2\,dx
 \leq\liminf_n\int_{\Om_s}|\nabla u_n|^2\,dx
 \leq\liminf_n\lambda_1(\Om_{r_n}).
\]
Letting $s\uparrow1$ and using \eqref{eq:rayleigh} and
\eqref{eq:limit-normalization}, we obtain
\[
 \lambda_1(\Om)\leq\int_{\Om}|\nabla v|^2\,dx
 \leq\liminf_n\lambda_1(\Om_{r_n}).
\]
Together with \eqref{eq:limsup}, this proves
\eqref{eq:eigenvalue-convergence}. Equality also shows that $v$ is a
minimizer in \eqref{eq:rayleigh}, hence a first Neumann eigenfunction.
In particular it satisfies the full weak Neumann equation
\[
 \int_{\Om}\nabla v\cdot\nabla h\,dx
 =\lambda_1(\Om)\int_{\Om}vh\,dx
 \qquad(h\in H^1(\Om)).
\]

Once eigenvalue convergence is known, the same compactness argument
applies to an arbitrary sequence $r_n\uparrow1$. On each relatively compact
subdomain of $\Om$, the functions $u_n$ eventually satisfy
\[
 -\Delta u_n=\lambda_1(\Om_{r_n})u_n.
\]
Their local $L^2$ norms and eigenvalues are uniformly bounded. Interior
elliptic estimates of every order, Sobolev embedding, and a diagonal
subsequence therefore give \eqref{eq:local-convergence}; the limit agrees
with the strong $L^2$ limit. Finally, if $\lambda_1(\Om)$ is simple,
normalization forces $v=u$ or $v=-u$. Changing signs gives $v=u$.
\end{proof}

\section{Passage of the hot-spots conclusion to the limit}

\begin{proof}[Proof of Theorem~\ref{thm:main}]
Normalize $u$ in $L^2(\Om)$. By Proposition~\ref{prop:spectral} and
simplicity, there are $r_n\uparrow1$ and first eigenfunctions $u_n$ on
$\Om_{r_n}$ such that
\[
 u_n\longrightarrow u\quad\text{locally uniformly in }\Om.
\]

First consider a possible unbounded supremum. For every $\delta>0$, the set
\[
 K_\delta=\{x\in\Om:\dist(x,\partial\Om)\geq\delta\}
\]
is a compact subset of $\Om$, so $u$ is bounded on $K_\delta$. If
$\sup_{\Om}u=+\infty$, its supremum in every boundary layer must therefore
also be $+\infty$, and \eqref{eq:hot} follows immediately.

We may consequently assume $M=\sup_{\Om}u<\infty$. Suppose
\eqref{eq:hot} fails. The boundary-layer suprema have a limit by
monotonicity. There are then $\delta>0$ and $\eta>0$ such that
\begin{equation}\label{eq:gap}
 u(x)\leq M-3\eta
 \quad\text{whenever }x\in\Om\text{ and }
 \dist(x,\partial\Om)<\delta.
\end{equation}
It follows that $M$ is attained on $K_\delta$, say at $x_0$.

Set $K=K_{\delta/2}$. Then $K\Subset\Om$, $x_0\in\operatorname{int}K$,
and every $x\in\partial K$ has $\dist(x,\partial\Om)=\delta/2$.
Thus \eqref{eq:gap} implies
\[
 u(x_0)=M,\qquad \max_{\partial K}u\leq M-3\eta.
\]
For sufficiently large $n$, $K\subset\Om_{r_n}$ and uniform convergence
on $K$ gives
\[
 u_n(x_0)>M-\eta,\qquad
 \max_{\partial K}u_n<M-2\eta.
\]
The continuous function $u_n$ attains its maximum over $K$ at a point
$y_n\in\operatorname{int}K$. This is an interior local maximum in
$\Om_{r_n}$, so $\nabla u_n(y_n)=0$. Since $\Om_{r_n}$ is smooth and
simply connected, this contradicts the smooth-domain input. Hence
\eqref{eq:hot} holds. Applying the same argument to $-u$ proves
\eqref{eq:cold}.
\end{proof}

\begin{proof}[Proof of Corollary~\ref{cor:continuous}]
Continuity on the compact set $\overline\Om$ implies uniform continuity.
Consequently the limiting supremum in shrinking interior boundary layers
equals $\max_{\partial\Om}u$: one inequality follows by taking subsequential
limits of points in the layers, and the other by approaching a maximizing
boundary point from within $\Om$. The analogous statement holds for the
minimum. Apply Theorem~\ref{thm:main}.
\end{proof}

\section{Scope of the argument}

\begin{remark}[Where simplicity is used]
Neither Lemma~\ref{lem:extension} nor Proposition~\ref{prop:spectral}
requires simplicity. Simplicity is used only to ensure that a prescribed
first eigenfunction on $\Om$ is, up to sign, the limit of first
eigenfunctions on the approximating domains. Without simplicity, the same
argument proves \eqref{eq:hot}--\eqref{eq:cold} for at least one nonzero
element of the first eigenspace. It does not establish them for every
element of that eigenspace.
\end{remark}

\begin{remark}[Which smooth-domain assertion is needed]
The maximum of $u_n$ over $K$ in the proof need not be its global maximum
over $\Om_{r_n}$. Thus the proof uses the absence of interior local
extrema supplied by the stronger smooth-domain input. The mere assertion
that global extremal values occur on the boundary of each smooth
approximating domain would not, by itself, justify this step under only
local uniform convergence.
\end{remark}

\begin{remark}[Relation to reflected Brownian motion]
Burdzy and Chen \cite{BC} establish weak convergence of reflecting Brownian
motions under increasing-domain approximation, subject to the hypotheses
and interpretations specified there. That result motivates the exhaustion
approach. The present proof instead works directly with Dirichlet energy
and compactness and establishes the needed spectral convergence without
invoking a stochastic convergence theorem or inferring spectral convergence
from strong resolvent convergence alone.
\end{remark}

\end{document}